\newtheorem{thm}{Theorem}[section]
\newtheorem{lem}[thm]{Lemma}
\address{Department of Mathematics, The Technion - Israel Institute of 
Technology, 32000 Haifa, Israel. Current  address (July 2011): Department of Mathematics, University of Haifa, Mount Carmel, 31905 Haifa, Israel. }
\email{dream@tx.technion.ac.il}
\keywords{Analytic function, fixed point, fundamental theorem of algebra, FTA, open mapping, OMT,  polynomial.}
\subjclass[2000]{26C10, 30A99, 47H10}
\begin{document}

\title[The open mapping theorem and the FTA]{The open mapping theorem  and the fundamental theorem of algebra}
\author{Daniel Reem}
\date{}
\maketitle

\begin{abstract}
This note is devoted to two classical theorems:
the open mapping theorem for analytic functions (OMT) and the
fundamental theorem of algebra (FTA). We present a new proof of
the first theorem, and then derive the second one by a simple
topological argument. The proof is elementary in nature and does
not use any kind of integration (neither complex nor real). In
addition, it is also independent of the fact that the roots of an
analytic function are isolated. The proof is based on either the
Banach or Brouwer fixed point theorems. In particular, this shows
that one can obtain a proof of the FTA (albeit indirect) which is
based on the Brouwer fixed point theorem, an aim which was not
reached in the past
and later the possibility to achieve it was questioned.
We close this note with a simple generalization of the FTA.
A short review of certain issues related to the OMT and the FTA is also included. \end{abstract}

\section{Introduction}
The open mapping theorem for analytic functions (OMT) says that any
non-locally constant analytic function $f$ is open (strongly
interior), i.e., $f(V)$ is open whenever $V$ is open. The usual
proofs of this theorem are based on either  Rouch\'{e}'s theorem
\cite[pp. 306-307]{LevinsonRedheffer},\cite[Chapter V]{Stoilow}, on the
index (winding number) and the argument principle
\cite[p. 173]{Ahlfors},
or on simple versions of the maximum or minimum modulus principles \cite[pp. 172-173]{Burckel},
\cite[pp. 256-257]{Remmert}.
 All of these proofs are based on complex integration theory.

Since the theorem ``is topological in nature'', mathematicians
sought a proof which is topological in nature, and uses minimal
analytical theory. In particular, it should not use the ``usual
sophisticated machinery of analysis'' \cite{WhyburnReview}, such as development in
power series or integration. In various places \cite[p.
260]{EgglestonUrsell},\cite[p. 93] {TitusYoung},\cite[p.
2]{WhyburnMemoir},\cite[p. viii]{WhyburnBook} such a proof is
regarded as an elementary, and this philosophy is well expressed
in the book of G. T. Whyburn \cite{WhyburnBook}.

In 1952, it seemed that such a proof was found by H. G. Eggleston
and H. D. Ursell \cite{EgglestonUrsell}. This was mentioned by
Eggleston and Ursell, and also by Whyburn in his review
\cite{WhyburnReview}. The proof is elementary in the sense that one
uses only the fact that $f'$ exists, and does not use the
development of $f$ in power series, or apply integration directly to
it. However, a careful reading of the proof shows that it is based
on an integral definition of the index, and also on the properties
of the complex exponent function $t\mapsto \exp(it)$ and the number
$\pi$ (see below), and hence it does use the usual machinery of
analysis.

Another attempt to find ``an elementary'' proof was by C. J. Titus
and G. S. Young \cite{TitusYoung}. They established a theorem
which is elementary in the above sense and implies the OMT,
assuming one knows that $f$ is light, i.e., that for any $c$, the
roots of the equation $f(z)=c$ are isolated. They however could
not establish the lightness of $f$ (in an
 elementary way).

Later, by modifications of the arguments in \cite{EgglestonUrsell},
Whyburn \cite[p. 76]{WhyburnBook} obtained a proof which bypassed
the integral definition of the index and seemed apparently
elementary, but again, a careful reading of it shows that it is
still based on either integration or power series, since it is based
on the definition and properties of the complex exponent function
$t\mapsto \exp(it)$ and the number $\pi$. (See \cite[pp.
43-46]{Ahlfors} for  discussion and formal definition of them using
power series. The definition $\exp(it)=\cos(t)+i\sin(t)$ which is
used in \cite[p. 53]{WhyburnBook} can be considered as formal only
after one gives a formal non-geometric definition of
$\cos(t),\sin(t)$ and $\pi$, and proves some basic trigonometric
formulas. See \cite[pp. 432-438]{Kitchen} where this is done by use of
integrals.)

 We note in addition that
all of these proofs are not elementary in the usual sense, for example
because they are long and based on hard theorems, such as the Jordan
curve theorem or considerations from degree theory. Especially this
is true for that of Whyburn, which is much longer and based on
results of several chapters of his
book.\\

Here we present a new proof of the OMT which is elementary in the
usual sense of the word (short, not based on hard theorems or hard
arguments), and is not based on complex integration theory. In fact,
it is not based on integration of any kind. Another property of this
proof is that it does not use the fact that $f$ is light. As far as
we know, all known proofs of the OMT, and in particular those
mentioned in the references, are based on this fact. Our proof is
based on either the Brouwer or the Banach fixed point theorems (so
in fact it can be considered as elementary only in the latter case).
Since the proof is also based on the theory of power series
\cite[pp. 39-46]{Ahlfors}, and on the Weierstrass definition of
analytic function as one which can be locally developed in power
series, it is not elementary in the sense described earlier.

As a corollary of this theorem  we obtain the fundamental theorem of
algebra (FTA) by a simple topological argument ($\mathbb{C}$ is
connected). In particular, this shows that one can obtain a proof
(albeit indirect) of the FTA based on the Brouwer fixed point
theorem. This is interesting, because in the past there was an
attempt to do it (by B. H. Arnold \cite{Arnold}) but this attempt
failed \cite{ArnoldMistake} due to a serious mistake. Moreover,
about 35 years later it was shown by A. Aleman \cite{Aleman} that it is impossible
to prove the FTA by the Brouwer fixed point theorem if one tries to
use the methods of \cite{Arnold}, i.e., that there is no hope to
correct the mistake in \cite{Arnold}. This cast doubt on the
possibility of proving the FTA by applying the Brouwer fixed point theorem.

It is interesting to note in this connection that there does exist a
proof of the FTA which is based on a fixed point theorem (the
Lefschetz fixed point theorem; see \cite{Medeiros}). In addition, a
careful reading of the proof of the FTA given in \cite{Wolfenstein}
shows that one of its ingredients is an argument similar to the one
appearing in the proof of the Banach fixed point theorem.

We also note that the connection between the OMT and the FTA is
not new. For example, R. L. Thompson \cite{Thompson} proved that
the FTA is equivalent to the open mapping theorem for polynomials.
In addition, the elementary proof of the FTA by S. Wolfenstein
\cite{Wolfenstein}, and the proof of the generalization of the FTA
by M. Reichaw (Reichbach) \cite[p. 160]{Reichaw} are also related
to connectedness and open mappings, but their arguments are
different from ours. (They use the fact that $f$ is locally open when
$f'(x)\neq 0$, that $f'(x)=0$ only on a finite set of points $A$
when $f$ is a polynomial, and that $\mathbb{C}\backslash A$ is
connected. In comparison, in our proof  we merely use the fact
that $\mathbb{C}$ is connected and it is irrelevant whether
$f'(x)$ vanishes.)

Finally, we note that we were informed about two things related to our proofs.
First, S. Reich has told us that the
topological argument we use for proving the FTA was also
independently mentioned by him in \cite{ReichAlgebra}, as a remark
on Thompson's paper. Second, R. B. Burckel has told us  that there is another elementary proof of the OMT, due to F. S. Cater \cite{Cater}. As mentioned in \cite{Cater}, there is another elementary proof of the OMT in the book of S. Lang \cite{Lang}. These two proofs are also based on power series and do not use integration of any kind. However, the arguments in both proofs are different from ours.
\section{proof of the OMT and the FTA}
We need the following simple lemma. It can be easily proved (and
improved) by use of integrals (
$T(y)-T(x)=\int_{[x,y]}T'(z)\textrm{d}z$), but also without
integrals as below.
\begin{lem}\label{lem:Inequality}
Let $B\subseteq \mathbb{C}$ be nonempty and convex. If $T:B\to
\mathbb{C}$ is
 differentiable and $\sup_{\xi\in B}|T'(\xi)|\leq a$,
then $T$ is Lipschitz on $B$ with a Lipschitz constant not greater
than $\sqrt{2}a$.
\end{lem}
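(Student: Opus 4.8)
My first move is to reduce the statement to a one-dimensional estimate along segments. Given $x,y\in B$, convexity guarantees that the segment $[x,y]$ lies in $B$, so I would consider the function $\gamma:[0,1]\to\mathbb{C}$ defined by $\gamma(t)=T\big(x+t(y-x)\big)$. The goal is to bound $|\gamma(1)-\gamma(0)|=|T(y)-T(x)|$ by $\sqrt{2}\,a\,|y-x|$. The derivative of $\gamma$, computed by the chain rule for complex-differentiable functions, is $\gamma'(t)=T'\big(x+t(y-x)\big)\cdot(y-x)$, so $|\gamma'(t)|\le a|y-x|$ for all $t$. Thus everything reduces to showing that a differentiable curve $\gamma:[0,1]\to\mathbb{C}$ with $|\gamma'|\le M$ everywhere satisfies $|\gamma(1)-\gamma(0)|\le\sqrt{2}\,M$ — with the understanding that we are not allowed to invoke the fundamental theorem of calculus or any integral representation.

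The core of the argument is therefore a no-integration mean value estimate for vector-valued (equivalently, $\mathbb{R}^2$-valued) functions of a real variable. Writing $\gamma(t)=u(t)+iv(t)$, the real functions $u,v$ are differentiable on $[0,1]$, and the ordinary (scalar) mean value theorem applies to each coordinate separately: there exist $s,\tau\in(0,1)$ with $u(1)-u(0)=u'(s)$ and $v(1)-v(0)=v'(\tau)$. Since $|u'(s)|\le|\gamma'(s)|\le M$ and likewise $|v'(\tau)|\le M$, I get $|\gamma(1)-\gamma(0)|=\sqrt{(u(1)-u(0))^2+(v(1)-v(0))^2}\le\sqrt{M^2+M^2}=\sqrt{2}\,M$. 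Unwinding the substitution $M=a|y-x|$ gives $|T(y)-T(x)|\le\sqrt{2}\,a\,|y-x|$, which is exactly the claimed Lipschitz bound.

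The only genuinely delicate point — really the reason the constant is $\sqrt{2}$ rather than $1$ — is that the mean value theorem must be applied to the two coordinates at \emph{different} points $s$ and $\tau$; there is in general no single point at which the full complex derivative realizes the difference quotient, so one cannot do better than $\sqrt{2}$ by this elementary route. I should also make sure the chain-rule step is legitimate: $t\mapsto x+t(y-x)$ is a real-variable map into $\mathbb{C}$, and one checks directly from the definition of the complex derivative of $T$ that $\gamma$ is differentiable with the stated derivative, since $T$ is assumed differentiable on all of $B$ (not merely on the segment, but the segment suffices). One small caveat worth a remark: if $x$ or $y$ lies on the boundary of $B$ where differentiability might be one-sided, one can either assume $B$ open without loss of generality for the intended application, or note that the coordinate functions $u,v$ are still continuous on $[0,1]$ and differentiable on $(0,1)$, which is all the scalar mean value theorem requires. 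No step here is a serious obstacle; the lemma is genuinely easy, and the main content is simply recognizing that coordinatewise application of the scalar MVT avoids all integration at the cost of the harmless factor $\sqrt{2}$.
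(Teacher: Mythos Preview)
Your proof is correct and follows essentially the same approach as the paper: parametrize the segment, apply the scalar mean value theorem to the real and imaginary parts separately, and combine with a $\sqrt{2}$ loss. The only cosmetic difference is that you compose with the segment first and invoke the complex chain rule to bound $|\gamma'|$, whereas the paper splits $T=u+iv$ as functions of two real variables and bounds $|\nabla u|\le|T'|$ via the Cauchy--Riemann equations before restricting to the segment.
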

\begin{proof}
We can write $T=u+iv$ where $u,v:B\to \mathbb{R}$ are
differentiable. Let $\xi_1,\xi_2\in B$ and let
$\gamma(t)=\xi_1+(\xi_2-\xi_1)t,\,t\in [0,1]$ be the line segment
connecting them. Since $|u_x|^2+|u_y|^2=|T'|^2\leq a^2$ by the
Cauchy-Riemann equations, the real version of Lagrange's mean value
theorem and the Cauchy-Schwarz inequality imply that
\begin{equation*}
|u(\xi_1)-u(\xi_2)|=|u(\gamma(0))-u(\gamma(1))|=|(u(\gamma))'(t)|\leq\|\nabla
u(\gamma)\||\xi_1-\xi_2|\leq a |\xi_1-\xi_2|.
\end{equation*}
The same holds for $v$. Hence
\begin{equation*}
|T(\xi_1)-T(\xi_2)|=\sqrt{|u(\xi_1)-u(\xi_2)|^2+|v(\xi_1)-v(\xi_2)|^2}\leq
\sqrt{2}a|\xi_1-\xi_2|.
\end{equation*}
\end{proof}
\begin{thm}\label{thm:OpenMapping}
Let $f(z)$ be a non-locally-constant analytic function defined in an
open set $V\neq \emptyset$. Then $f$ is open, i.e., $f(U)$ is open whenever
$U\subseteq V$ is open.
\end{thm}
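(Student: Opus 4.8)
The plan is to reduce the global statement to a local one and then prove the local statement by a fixed-point argument. First I would observe that it suffices to show that for every $z_0\in U$ the image $f(U)$ contains a neighborhood of $w_0:=f(z_0)$. By translating we may assume $z_0=0$ and $w_0=0$, so $f(0)=0$. Since $f$ is analytic, in a neighborhood of $0$ it is given by a convergent power series $f(z)=\sum_{n=1}^\infty a_n z^n$ (the constant term vanishes). Because $f$ is not locally constant, not all coefficients vanish; let $m\geq 1$ be the smallest index with $a_m\neq 0$, so $f(z)=a_m z^m(1+g(z))$ where $g$ is analytic near $0$ with $g(0)=0$.

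The core of the argument is to solve $f(z)=w$ for all small $w$. Given a target value $w$, I would rewrite the equation $\sum_{n\geq m}a_n z^n=w$ as a fixed-point equation $z=\Phi_w(z)$ for a suitable map $\Phi_w$. The natural choice is to isolate the leading term: writing $a_m z^m = w - \sum_{n>m}a_n z^n$ suggests $\Phi_w(z) = \big((w-\sum_{n>m}a_n z^n)/a_m\big)^{1/m}$, but the $m$-th root is awkward; it is cleaner to first pass to a local $m$-th root of $1+g(z)$ (available since $1+g$ is analytic and nonzero near $0$, so $1+g(z)=h(z)^m$ with $h$ analytic, $h(0)=1$) and then substitute $\zeta = a_m^{1/m} z\, h(z)$, reducing to the case $m=1$, i.e. $f$ of the form $\zeta + O(\zeta^2)$ in the new variable. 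In the $m=1$ case the fixed-point map is simply $\Phi_w(\zeta)=w-r(\zeta)$ where $r(\zeta)=f(\zeta)-\zeta=\sum_{n\geq 2}c_n\zeta^n$ collects the higher-order terms. One then checks, using Lemma~\ref{lem:Inequality} to bound $|r(\zeta_1)-r(\zeta_2)|$ by $\sqrt2\big(\sup|r'|\big)|\zeta_1-\zeta_2|$ on a small disk, that on a sufficiently small closed disk $\overline{D}(0,\rho)$ the map $r'$ is uniformly small, hence $r$ is a contraction there with constant, say, $\le 1/2$; then for every $w$ with $|w|\le\rho/2$ the map $\Phi_w$ sends $\overline{D}(0,\rho)$ into itself and is a contraction, so by the Banach fixed point theorem it has a fixed point $\zeta$, which solves $f(\zeta)=w$. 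This exhibits $D(0,\rho/2)\subseteq f(\overline{D}(0,\rho))\subseteq f(U)$, proving openness. (Alternatively, one can avoid the completeness/contraction framework and invoke Brouwer: $\Phi_w$ is a continuous self-map of the closed disk, hence has a fixed point — this is the variant that yields the FTA via Brouwer.)

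The main obstacle, and the step deserving the most care, is the change of variables making the substitution $\zeta\mapsto z$ legitimate and bijective on a neighborhood, i.e. justifying the reduction to $m=1$: one must verify that $z\mapsto a_m^{1/m}z\,h(z)$ is an analytic local homeomorphism near $0$ (its derivative at $0$ is $a_m^{1/m}\neq 0$, but to stay elementary and not invoke the inverse function theorem or lightness one may instead directly produce a local analytic inverse by another application of the same contraction scheme, or simply carry the factor $m$ through the estimates without changing variables at all). Keeping the whole argument free of integration means all the needed bounds on $r'$ and on the tails of the power series must come from elementary majorization of power series together with Lemma~\ref{lem:Inequality}; that is routine but must be done honestly. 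Everything else — the translation reductions, extracting the leading term, and the final topological conclusion that a set containing a disk around each of its points is open — is immediate.
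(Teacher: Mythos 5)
Your plan is correct in outline and takes a genuinely different route from the paper, so a comparison is in order. The paper works directly on the equation $\xi^k=(w-w_0)/(a_k+h(\xi))$: it introduces two explicit right inverses $g_1,g_2$ of $\xi\mapsto\xi^k$ (branches of the $k$-th root with different cuts), and, depending on the sign of $\mathrm{Re}\big((w-w_0)/a_k\big)$, chooses the branch whose discontinuity ray is avoided by $(w-w_0)/(a_k+h(\xi))$; this branch bookkeeping (the $\mathrm{Arg}\in(-\pi/8,\pi/8)$ condition) is the delicate part, and the whole thing is then closed with one contraction estimate. Your plan instead factors $f(z)=w_0+\psi(\xi)^m$ where $\psi(\xi)=a_m^{1/m}\,\xi\,h(\xi)$ and $h^m=1+g$ is an analytic $m$-th root of a function close to $1$; since $\xi\mapsto\xi^m$ is trivially open at $0$, this reduces the openness of $f$ to the openness of the nondegenerate ($m=1$) map $\psi$, which you handle by the branch-free contraction $z\mapsto(u-r(z))/c$. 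You thus trade the paper's branch-cut management for the need to construct $h$ by pure power-series methods (binomial series, or solving $h^m=1+g$ coefficient by coefficient with a majorant) to keep the argument integration-free. Both routes use Lemma~\ref{lem:Inequality} and the Banach (or Brouwer) fixed-point theorem, both avoid integration and any appeal to lightness, and both rest on the Weierstrass power-series viewpoint.

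One clarification about the step you flag as the main obstacle: you do not need $\psi$ to be a local homeomorphism, and you should not worry about producing a local analytic inverse or invoking any form of the inverse function theorem. Since $f-w_0=\psi^m$ and $\xi\mapsto\xi^m$ carries $D(0,\epsilon)$ onto $D(0,\epsilon^m)$, it suffices that the image of $\psi$ contain a disk about $0$, which is precisely the $m=1$ openness statement your contraction already gives. So the worry dissolves once one notices that surjectivity of $\psi$ onto a neighborhood, not injectivity, is what the reduction requires. A small slip in wording: after the substitution, $f$ equals $w_0+\psi^m$, not ``$\zeta+O(\zeta^2)$''; it is the change-of-variables map $\psi$ itself that has the $m=1$ form $c\,\xi+O(\xi^2)$ with $c=a_m^{1/m}\neq 0$.
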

\begin{proof}
Suppose $\emptyset\neq U\subseteq V$ is open, and let $z_0\in U$ and
$w_0=f(z_0)\in f(U)$. It should be proved that there exists $r>0$ such
that the open ball $B(w_0,r)$ of radius $r$ and center $w_0$ is
contained in $f(U)$, i.$\,$e., that for any $w\in B(w_0,r)$, the
equation $w=f(z)$ has a solution $z\in U$.

Since $f$ is analytic, it can be represented as
$f(z)=\sum_{k=0}^{\infty}a_k(z-z_0)^k$ in a neighborhood of $z_0$.
Let $1\leq k\in \mathbb{N}$ be the minimal index for which the
coefficient $a_k$ does not vanish. Such $k$ exists, because
otherwise $f$ is locally constant in that neighborhood of $z_0$ (and
in fact globally constant if $V$ is also connected, by the
identity/uniqueness theorem).
We can write $f(z)=a_0+(z-z_0)^k(a_k+h(z-z_0))$, where $a_0=w_0$ and
\begin{equation*}
h(\xi)=\sum_{p=k+1}^{\infty}a_p\xi^{p-k}.
\end{equation*}
By the change of variables $\xi=z-z_0$, the equation $f(z)=w$
becomes
\begin{equation}\label{eq:xi^k}
\xi^k=\frac{w-w_0}{a_k+h(\xi)}.
\end{equation}
Now it is tempting to take root and transform this equation to a
fixed point equation and then to use a corresponding fixed point
theorem, but one should be careful, because there are several
candidates for the root, and each one of them has a line of discontinuity. Let
$g_1,g_2$ be the complex functions defined by
\begin{equation*}
g_1(\xi)=|\xi|^{\frac{1}{k}}\cdot
e^{i\frac{\textrm{arg}(\xi)}{k}},\quad
g_2(\xi)=|\xi|^\frac{1}{k}\cdot e^{i\frac{\textrm{Arg}(\xi)}{k}}.
\end{equation*}
Roughly speaking, $g_1$ and $g_2$ are ``$\,\xi^{1/k}\,$". Formally,
$g_1$ and $g_2$ are right inverses of the function $G(\xi)=\xi^k$,
i.e., $G(g_1(\xi))=G(g_2(\xi))=\xi$. The reversed equalities
$g_1(G(\xi))=\xi,\, g_2(G(\xi))=\xi$ are not necessarily true (take
$k=2$ and $\xi=-1$ for example). Because $0\leq
\textrm{arg}(\xi)<2\pi$ and $-\pi\leq \textrm{Arg}(\xi)<\pi$, $g_1$
and $g_2$ are continuously differentiable
 in
$\mathbb{C}\backslash([0,\infty)\times\{0\})$ and
$\mathbb{C}\backslash((-\infty,0]\times\{0\})$ respectively. In
other words, $g_1$ ($g_2$) is continuously differentiable at any
$\xi\neq 0$ with $\textrm{Arg}(\xi)\neq 0$ ($\textrm{Arg}(\xi)\neq -\pi$).

Since $\lim_{\xi\to 0} h(\xi)=0$, there exists $0<\rho$
sufficiently small such that $B[z_0,\rho]\subset U$, and such that $|a_k/2|<|h(\xi)+a_k|$ and
$\textrm{Arg}(a_k/(h(\xi)+a_k))\in(-\pi/8,\pi/8)$ for all $\xi$ in
the small closed ball $B[0,\rho]=\overline{B(0,\rho)}$.

Let
\begin{equation*}
0<r\leq \min(|a_k/2|{\rho}^k,(1/(2\alpha))^k),
\end{equation*}
where
\begin{equation*} \alpha=(1/k)\cdot
|2/a_k|^{3-\frac{1}{k}}\cdot (1+\sup_{|\xi|\leq \rho}|h'(\xi)|).
\end{equation*}
The reason for choosing these values will become clear in a
moment. We claim that the ball $B(w_0,r)$ is contained in $f(U)$.
To see this, fix $w\in B(w_0,r)$. It can be assumed that $w\neq
w_0$, because otherwise $w=f(z_0)$ and we are done. It suffices to
show that \eqref{eq:xi^k} has a
solution $\xi \in B[0,\rho]$.
 Consider the equation
\begin{equation}\label{eq:fixed}
\xi=g\left(\frac{w-w_0}{h(\xi)+a_k}\right)\equiv T(\xi).
\end{equation}
Here we take $g=g_1$ if $\textrm{Re}((w-w_0)/a_k)< 0$ and $g=g_2$
otherwise. By applying the function $G(\xi)=\xi^k$ to
\eqref{eq:fixed},  we see that any solution of \eqref{eq:fixed} is a
solution of \eqref{eq:xi^k} (the converse is not necessarily true
because it may happen that $g(G(\xi))\neq \xi$), so it suffices to
show that \eqref{eq:fixed} has a solution $\xi \in B[0,\rho]$.

$T$ is well defined in $B[0,\rho]$ by the choice of $\rho$.
In addition, it is continuously differentiable there,  because
$(w-w_0)/(h(\xi)+a_k)$ is outside the discontinuous set (ray) of
$g$. (For example, if $g=g_1$, then
$\textrm{Arg}((w-w_0)/(h(\xi)+a_k))=\textrm{Arg}(a_k/(h(\xi)+a_k))+\textrm{Arg}((w-w_0)/a_k)\notin
(-\pi/8,\pi/8)$ by the choice of $\rho$.)

By the choice of $\rho$ it follows that $|T(\xi)|\leq
|2r/a_k|^{\frac{1}{k}}$
for $\xi\in B[0,\rho]$, so $T(B[0,\rho])\subseteq B[0,\rho]$ by the
choice of $r$. Since $T$ is continuous, we can finish the proof by
applying the Brouwer fixed point theorem to \eqref{eq:fixed}.
However, we will show below that the more elementary Banach fixed
point theorem suffices for this purpose.
By the choice of $r$ and $\alpha$,
\begin{equation*}
|T'(\xi)|=\frac{|w-w_0|^{\frac{1}{k}}\cdot|h'(\xi)|}{k\cdot
|h(\xi)+a_k|^{3-\frac{1}{k}}} \leq
\alpha \cdot r^{\frac{1}{k}},\,\, \forall \xi\in B[0,\rho],
\end{equation*}
so $\sup_{\xi\in B[0,\rho]}|T'(\xi)|\leq0.5$, again by the choice of
$r$. Thus, by Lemma \ref{lem:Inequality}, $T$ is Lipschitz on
$B[0,\rho]$ with a Lipschitz constant not greater than
$0.5\sqrt{2}<1$. Since $B[0,\rho]$ is a complete metric space, the
Banach fixed point theorem implies that \eqref{eq:fixed} has a
(unique) solution $\xi\in B[0,\rho]$.
\end{proof}

\begin{thm}\label{thm:FTA}
Let $1\leq m\in \mathbb{N}$ and let $f:\mathbb{C}\to
\mathbb{C},\,f(z)=a_mz^m+\ldots+a_1z+a_0$, be a polynomial of degree
$m$ ($a_m\neq 0$). Then $f$ is surjective, i.e.,
$f(\mathbb{C})=\mathbb{C}$. In particular, $0\in f(\mathbb{C})$,
i.e., $f$ has a root.
\end{thm}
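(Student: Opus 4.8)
The plan is to derive the fundamental theorem of algebra from the open mapping theorem (Theorem~\ref{thm:OpenMapping}) via a connectedness argument, exactly as advertised in the introduction. A polynomial $f(z)=a_mz^m+\dots+a_0$ with $a_m\neq 0$ is entire, hence analytic on the open set $V=\mathbb{C}$, and it is not locally constant (its derivative is a nonzero polynomial, or more elementarily $f(z)-a_0$ has only finitely many zeros while a locally constant analytic function on the connected set $\mathbb{C}$ would be globally constant). Therefore Theorem~\ref{thm:OpenMapping} applies and $f(\mathbb{C})$ is an open subset of $\mathbb{C}$. The strategy is then to show $f(\mathbb{C})$ is also closed in $\mathbb{C}$; since $\mathbb{C}$ is connected and $f(\mathbb{C})$ is nonempty, it must be all of $\mathbb{C}$, which gives surjectivity, and in particular $0\in f(\mathbb{C})$ yields a root.

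First I would establish that $f(\mathbb{C})$ is closed in $\mathbb{C}$. Let $(w_n)$ be a sequence in $f(\mathbb{C})$ converging to some $w\in\mathbb{C}$, and pick $z_n\in\mathbb{C}$ with $f(z_n)=w_n$. The key point is that $(z_n)$ is bounded: because $|f(z)|\to\infty$ as $|z|\to\infty$ (since $|f(z)|\geq |a_m||z|^m - (|a_{m-1}|+\dots+|a_0|)|z|^{m-1} \to \infty$), an unbounded subsequence of $(z_n)$ would force $|w_n|=|f(z_n)|\to\infty$, contradicting the convergence of $(w_n)$. Hence, by Bolzano--Weierstrass, $(z_n)$ has a subsequence converging to some $z\in\mathbb{C}$, and by continuity of $f$ we get $f(z)=\lim f(z_{n_j})=w$, so $w\in f(\mathbb{C})$. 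This shows $f(\mathbb{C})$ is closed.

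Then I would combine the two facts: $f(\mathbb{C})$ is a nonempty subset of $\mathbb{C}$ that is both open (by Theorem~\ref{thm:OpenMapping}) and closed (just shown). Since $\mathbb{C}$ is connected, the only nonempty clopen subset is $\mathbb{C}$ itself, so $f(\mathbb{C})=\mathbb{C}$. In particular every complex number, and in particular $0$, lies in the image, so $f$ has a root.

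The main obstacle is really the only nontrivial analytic input beyond the OMT, namely the properness-type estimate $|f(z)|\to\infty$ as $|z|\to\infty$; but this is a completely elementary estimate on polynomials (a lower bound via the triangle inequality, isolating the leading term) and requires no integration, no winding numbers, and no appeal to lightness of $f$, consistent with the paper's stated goals. Everything else is standard point-set topology (Bolzano--Weierstrass and the connectedness of $\mathbb{C}$), so the proof is short once Theorem~\ref{thm:OpenMapping} is in hand.
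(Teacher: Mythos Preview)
Your proposal is correct and follows essentially the same approach as the paper: invoke Theorem~\ref{thm:OpenMapping} to get that $f(\mathbb{C})$ is open, show $f(\mathbb{C})$ is closed via the elementary estimate $|f(z)|\to\infty$ as $|z|\to\infty$ combined with Bolzano--Weierstrass and continuity, and conclude by connectedness of $\mathbb{C}$. The only differences are cosmetic (you spell out why $f$ is not locally constant and give the triangle-inequality lower bound explicitly), not substantive.
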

\begin{proof}
Since $f(\mathbb{C})$ is nonempty ($f(0)\in f(\mathbb{C})$) and open
(by Theorem \ref{thm:OpenMapping}), it suffices to show that it is
closed, because then, the fact that $\mathbb{C}$ is connected will
imply that $f(\mathbb{C})=\mathbb{C}$.

Let $w\in \mathbb{C}$ be given and let $(z_n)_{n=1}^{\infty}$ be a sequence of complex numbers with the property that $\lim_{n\to\infty}f(z_n)=w$. In particular,the sequence  $(f(z_n))_{n=1}^{\infty}$ is bounded.
 Hence the sequence $(z_n)_{n=1}^{\infty}$ is bounded,
 because otherwise
\begin{equation*}
\overline{\lim}_{n\to\infty}\,\,|f(z_n)|=\overline{\lim}_{n\to\infty}\,\,|(z_n)^m|\cdot|a_m+\frac{a_{m-1}}{z_n}+\ldots+\frac{a_0}{(z_n)^m}|=
\infty\cdot |a_m|=\infty.
 \end{equation*}
Thus $(z_n)_n$ has a convergent subsequence $z_{n_k}\to z\in
\mathbb{C}$, so $w=f(z)$ since $f$ continuous.
\end{proof}

\section{Concluding remarks}
We finish by remarking that the proof of Theorem \ref{thm:FTA} implies that 
Theorem \ref{thm:FTA} can be obviously generalized as follows: an entire
analytic function $f$ is surjective if and only if its image is
closed. It would be interesting to obtain a simple necessarily
and/or sufficient condition for this in terms of the coefficients of
$f$. Unfortunately, with the exception of the condition that almost
all its coefficients vanish, i.e., that it is a polynomial, we do
not know any such condition.

Nevertheless, one can indeed obtain some simple sufficient
conditions for $f$ to be surjective in terms of a possible
representation that it might have. A trivial example is when $f$ is a
composition of two surjective mappings. Another example is when
$f(z)=p(g(z))+q(1/g(z))$, where $p$ and $q$ are non-constant
polynomials and $g$ is a non-constant analytic function which
does not vanish, such as $g(z)=\exp(\alpha z),\,\,\alpha\neq 0$.
Indeed, suppose $p(z)=\sum_{k=0}^n a_kz^k,\,q(z)=\sum_{k=0}^m
b_kz^k$ where $a_n\neq 0,b_m\neq 0$, and let $w\in \mathbb{C}$ be
given. Since $a_n\neq 0,b_m\neq 0$, the FTA  and a simple
manipulation imply that there exists $t\neq 0$ such that
$p(t)+q(1/t)=w$. Because $g(\mathbb{C})=\mathbb{C}\backslash\{0\}$
by Picard's theorem, there exists $z\in \mathbb{C}$ such that
$g(z)=t$, so $f(z)=w$ and $f$ is surjective. Hence, for instance,
the function $f(z)=\cos^7(z^2+z+i)-3\cos(z^2+z+i)+1$ is
surjective, and in particular it has a root.\\\\
{\noindent }\textbf{Acknowledgments}\vspace{0.2cm}\\ I would like to
thank Simeon Reich and Robert B. Burckel for some useful remarks and for making me aware of references \cite{Cater} and \cite{ReichAlgebra}.

%\newpage

\bibliographystyle{amsplain}
\bibliography{biblio}

%\bibliographystyle{amsplain}
%\addcontentsline{toc}{article}{Bibliography}
%\bibliography{biblio}

\end{document}